\newcommand{\fa}[2]{{#1}\langle{#2}\rangle}
\newcommand{\N}{\mathbb{N}}
\newcommand{\Z}{\mathbb{Z}}
\newcommand{\Q}{\mathbb{Q}}
\theoremstyle{plain}
\newtheorem{theorem}{Theorem}
\newtheorem{lemma}[theorem]{Lemma}
\begin{document}

\title{On free associative algebras linearly graded by finite groups}

\author{Vitor O.~Ferreira}
\thanks{The first author was partially supported by
  CNPq (Grant 308163/2007-9) and by PRP-USP (ProIP Proc.~2006.1.24191.1.6).}
  
\author{Lucia S.~I.~Murakami}
\thanks{The second author was partially supported by
  PRP-USP (ProIP Proc.~2006.1.24191.1.6).}

\address{Department of Mathematics, Institute of Mathematics
and Statistics, University of S\~ao Paulo, Caixa Postal 66281,
S\~ao Paulo - SP, 05314-970, Brazil}

\email{vofer@ime.usp.br, ikemoto@ime.usp.br}

\date{8 November 2008}

\subjclass[2000]{16S10, 16W30, 16W50}

\keywords{Free associative algebra; Hopf algebra actions; group gradings; invariants}

\begin{abstract}
As an instance of a linear action of a Hopf algebra on a free 
associative algebra, we consider finite group gradings of a free algebra
induced by gradings on the space spanned by the free generators.
The homogeneous component corresponding to the identity of the group
is a free subalgebra which is graded by the usual degree. We look into its 
Hilbert series and prove that it is a rational function by giving
an explicit formula. As an application, we show that, under suitable
conditions, this subalgebra
is finitely generated if and only if the grading on the base
vector space is trivial.
\end{abstract}

\maketitle

\section*{Introduction}

Let $k$ be a field and let $V$ be a vector space over $k$. 
We shall denote by $T(V)$ the \emph{tensor algebra} on $V$.
It is defined by
$$
  T(V) = \bigoplus_{n\geq 0} V^{\otimes n},
$$
where $V^{\otimes 0} = k$ and $V^{\otimes n} = V^{\otimes n-1}\otimes_k V$ for
all $n\geq 1$, with multiplication satisfying
$$
  (v_1\otimes\dots\otimes v_n)(u_1\otimes\dots\otimes u_m) =  
    v_1\otimes\dots\otimes v_n \otimes u_1\otimes\dots\otimes u_m,
$$
for all $n,m\geq 1$ and $v_i,u_j\in V$. If $X$ is a basis of $V$ over $k$, then there
is a natural isomorphism between $T(V)$ and the free associative algebra
$\fa{k}{X}$ on $X$ over $k$. So if $\dim V= d$, then $T(V)$ is a free
algebra of rank $d$ over $k$.

Now let $H$ be a Hopf algebra over $k$ and suppose that $V$ is a left $H$-module.
The action of $H$ on $V$ induces a structure of an $H$-module algebra
on $T(V)$ such that
$$
  h\cdot (v_1\otimes\dots\otimes v_n) = \sum_{(h)} 
    (h_{(1)}\cdot v_1)\otimes\dots\otimes (h_{(n)}\cdot v_n),
$$
for all $h\in H$, $n\geq 1$ and $v_i\in V$. Such Hopf algebra actions on
$T(V)$ will be called \emph{linear actions}. We shall say that
a linear action is \emph{scalar} whenever it is induced by a
scalar action of $H$ on $V$, that is to say, whenever for every
$h\in H$, there exists a scalar $\lambda=\lambda(h)\in k$ such that 
$h\cdot v = \lambda v$, for all $v\in V$.

Given a Hopf algebra $H$ and an arbitrary $H$-module algebra
$A$, the subset
$$
  A^H = \{a\in A : h\cdot a = \varepsilon(h)a\text{, for all $h\in H$}\}
$$
is a subalgebra, called the \emph{subalgebra of invariants} of $A$ under
the action of $H$.

The study of the subalgebra of invariants $T(V)^H$ under the action
of a Hopf algebra $H$ has received attention recently. It is
a natural extension of the so-called noncommutative invariant theory,
which concerns invariants of a free algebra under the action of
a group of linear automorphisms. Lane \cite{dL76} and Kharchenko \cite{vK78}
showed that if $G$ is a group of linear automorphisms of a free
algebra $T(V)$ then the subalgebra of invariants $T(V)^G$ of 
$T(V)$ under the action of $G$ is again free. This has been generalized in
\cite{FMP04}, where it is shown that the subalgebra of invariants
$T(V)^H$ under a linear action of an arbitrary Hopf algebra
is free. In fact, for a finite-dimensional pointed Hopf
algebra $H$ acting linearly on a free algebra $T(V)$, in \cite{FMP04} a Galois 
correspondence between the subalgebras of $H$ which are right coideals
and the free subalgebras of $T(V)$ containing $T(V)^H$ is built.
This correspondence generalizes Kharchenko's correspondence
in \cite{vK78} for groups of linear automorphisms.

Regarding the rank of $T(V)$, in the case of a finite
group $G$ of linear automorphisms of $T(V)$, Dicks and Formanek
\cite{DF82} and Kharchenko \cite{vK84} have shown that
$T(V)^G$ is a finitely generated algebra if and only if
each element of $G$ is a scalar automorphism. In \cite{FM07},
the authors use the Galois correspondence of \cite{FMP04} to
show that the same happens in a more general setting. 
Precisely, if $H$ is a finite-dimensional Hopf algebra
generated by grouplike and skew-primitive elements which
act linearly on a free algebra $T(V)$ of finite rank,
then $T(V)^H$ is finitely generated as an algebra if
and only if the action of $H$ is in fact scalar.

The free algebra $T(V)$ has a natural ($\N$-)grading,
induced by the usual degree function on $T(V)$, under which
the homogeneous component of degree $n$, for a positive
integer $n$, is the span of the monomials of length $n$
on the elements of $V$. If $H$ is a Hopf algebra acting
linearly on $T(V)$, then this grading is inherited by
$T(V)^H$, for the action of $H$ is clearly homogeneous.

One can attach to a general graded algebra $A=\bigoplus_{n\geq 0} A_n$
such that $A_n$ is a finite-dimensional vector subspace
a formal power series over $\Z$, its \emph{Hilbert (or Poincar\'e) series} 
$P(A,t)$, defined by
$$
  P(A,t) = \sum_{n\geq 0} (\dim A_n) t^n.
$$
This is a combinatorial object that codifies quantitative information
on the homogeneous components of $A$. It is an elementary fact that
given a vector space $V$ of finite dimension $d$, the Hilbert
series of its tensor algebra is a very simple rational function given by
$$
  P(T(V),t) = \frac{1}{1-dt}.
$$ 

Thus, given a Hopf algebra acting linearly on a free algebra $T(V)$ of
finite rank, it is of interest to investigate the nature of
the Hilbert series of the subalgebra of invariants $T(V)^H$.

In \cite{DF82}, Dicks and Formanek give explicit formulas (depending on 
the characteristic of $k$) for the Hilbert series
of the subalgebra of invariants $T(V)^G$ of a free algebra of finite
rank $T(V)$ under the action of a finite group $G$ of linear automorphisms.

\medskip

In this paper we restrict to actions of dual Hopf algebras of
group algebras on free algebras, that is to say, we consider free
algebras graded by finite groups. So if $T(V)$ is a linearly
$G$-graded algebra for a finite group $G$, in Section~\ref{sec:tve}
we describe the grading by the degree function that $T(V)_e$
inherits from this grading on $T(V)$, where $e$ stands for the identity
element of $G$. Section~\ref{sec:hilbert} is
devoted to the deduction of an explicit formula for the Hilbert
series of $T(V)_e$. Finally, in Section~\ref{sec:fg}, we apply
the results of the previous section in order to obtain a
criterium for finite generation of $T(V)_e$ in terms of
the $G$-grading.

\medskip

We shall make use of the usual definitions and notation
of Hopf algebra theory as found in \cite{sM93a} or
\cite{DNR01}.

\section{Linear group gradings on free algebras}
\label{sec:tve}

Let $G$ be a finite group and let $kG$ be the Hopf group algebra of
$G$ over $k$. Now let $H$ be the dual Hopf algebra $(kG)^{\ast}$ of $kG$. 
So, $H$ is the vector space of all linear functionals on $kG$ with
multiplication given by
$$
  \langle \alpha\beta,x\rangle = \langle\alpha,x\rangle\langle\beta,x\rangle,
    \quad\text{for all $\alpha,\beta\in H$ and $x\in G$,}
$$
and comultiplication satisfying
$$
  \Delta(p_x) = \sum_{yz = x} p_y\otimes p_z,
    \quad\text{for all $x\in G$,}
$$
where $\{p_x : x\in G\}$ is the dual basis of the basis $G$ of $kG$, \textit{i.e.},
one has
$$
  p_x(y) = \delta_{x,y},\quad\text{for all $x,y\in G$}.
$$
The counit of $H$ is just the augmentation map $\varepsilon\colon kG\rightarrow k$.

For this Hopf algebra, it is a well-known fact that an algebra $A$ is an $H$-module algebra
if and only if $A$ is \emph{$G$-graded}, that is to say, there exists a family
$\{A_x : x\in G\}$ of subspaces of $A$ satisfying
\begin{enumerate}
  \item $1_A\in A_e$, where $1_A$ stands for the unity of $A$ and $e$ for the identity
    element of $G$, and
  \item $A_xA_y\subseteq A_{xy}$, for all $x\in G$.
\end{enumerate}
Moreover, when this is the case, $A^H = A_e$, the homogeneous component of the grading
of $A$ associated to the identity element of $e$, henceforth referred to as the 
\emph{identity component}.

Now let $V$ be a vector space of finite dimension $d$ over $k$ and suppose
that $H=(kG)^{\ast}$ acts linearly on $T(V)$. So we have an action of $H$ on $V$ which
induces the action on $T(V)$. This amounts to saying that we are given
a decomposition $V = \bigoplus_{x\in G}V_x$ of $V$ as a direct sum of
subspaces indexed by the elements of $G$ --- when this is the case
we shall say that $V$ is a \emph{$G$-graded vector space} --- and that $T(V)$ has a structure
of a $G$-graded algebra induced by this decomposition. More specifically,
we have a decomposition of $T(V)$ given by
$$
  T(V) = \bigoplus_{x\in G} T(V)_x,
$$
where for each $x\in G$, $x\ne e$, the subspace $T(V)_x$ is given by
$$
  T(V)_x = \bigoplus_{y_1\dots y_n=x} V_{y_1}\otimes\dots\otimes V_{y_n},
    \quad\text{for all $n\geq 1$ and $y_i\in G$,}
$$
and 
$$
  T(V)_e = k\oplus\bigoplus_{y_1\dots y_n=e} V_{y_1}\otimes\dots\otimes V_{y_n},
    \quad\text{for all $n\geq 1$ and $y_i\in G$.}
$$
 
As we have seen above, $T(V)_e$ is a free subalgebra of $T(V)$.

\section{The Hilbert series of a linearly graded free algebra}
\label{sec:hilbert}

For the decomposition $V=\bigoplus_{x\in G} V_{x}$, write
$d_x = \dim V_{x}$. We shall look at the Hilbert series
$P(T(V)_e,t)$ of the graded subalgebra $T(V)_e$. 

We start by establishing a recursive relation among the coefficients of $P(T(V)_e,t)$
and coefficients of the Hilbert series of the remaining homogeneous components
of $T(V)$.

For each $n\geq 1$, we have
$$
  V^{\otimes n} = \bigoplus_{x\in G} \left( \bigoplus_{y_1\dots y_n=x}
                  V_{y_1}\otimes\dots\otimes V_{y_n}\right).
$$
For each $x\in G$, let
\begin{equation}\label{eq:vx}
  (V^{\otimes n})_x = \bigoplus_{y_1\dots y_n=x} V_{y_1}\otimes\dots\otimes V_{y_n}
\end{equation}
and let $a_n^{(x)} = \dim (V^{\otimes n})_x$.

\begin{lemma}\label{le:lema1}
  The numbers $a_n^{(x)}$ defined above satisfy the following
  recursive relations
  \begin{equation}\label{eq:anx}
    a_{n}^{(x)} = \sum_{y\in G} d_{xy^{-1}} a_{n-1}^{(y)},
    \quad\text{for $n\geq 1$,}
  \end{equation}
  where $a_0^{(x)} = \delta_{e,x}$.
\end{lemma}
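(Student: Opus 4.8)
The statement is essentially bookkeeping on the tensor decomposition, so the plan is to verify the base case directly and then obtain the recursion by splitting the defining direct sum \eqref{eq:vx} according to the first tensor factor.

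First I would dispose of the base case. Since $V^{\otimes 0} = k$ and the empty product of elements of $G$ equals the identity $e$, the space $(V^{\otimes 0})_x$ is $k$ when $x = e$ and $0$ otherwise; hence $a_0^{(x)} = \delta_{e,x}$, as required. For the recursive step with $n \geq 1$, the key observation is that fixing the first index $y_1 = z$ partitions the index set $\{(y_1,\dots,y_n)\in G^n : y_1\cdots y_n = x\}$. Once $y_1 = z$ is fixed, the condition on the remaining indices becomes $y_2\cdots y_n = z^{-1}x$. Comparing with \eqref{eq:vx}, this lets me rewrite the direct sum as
$$
  (V^{\otimes n})_x = \bigoplus_{z\in G} V_z \otimes (V^{\otimes (n-1)})_{z^{-1}x}.
$$
Taking dimensions and using $\dim(A\otimes B) = (\dim A)(\dim B)$ then yields $a_n^{(x)} = \sum_{z\in G} d_z\, a_{n-1}^{(z^{-1}x)}$.

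It remains to re-index this sum into the stated form. The substitution $y = z^{-1}x$ is a bijection of $G$ onto itself with inverse $z = xy^{-1}$, under which $d_z = d_{xy^{-1}}$, so the sum becomes $\sum_{y\in G} d_{xy^{-1}}\, a_{n-1}^{(y)}$, which is precisely \eqref{eq:anx}. The one point that deserves care is the direction of this substitution: grouping by the \emph{first} factor (rather than the last) is what produces $d_{xy^{-1}}$ rather than $d_{y^{-1}x}$, and since $G$ need not be abelian these are genuinely different expressions. I therefore expect this indexing to be the sole, and quite mild, obstacle; the tensor decomposition underlying it is immediate from associativity of the group multiplication.
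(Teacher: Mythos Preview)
Your argument is correct and matches the paper's approach: both split $(V^{\otimes n})_x$ according to the first tensor factor and read off the dimension identity. The only cosmetic difference is that the paper indexes the outer sum directly by $y = z^{-1}x$ (writing the first factor as $V_{xy^{-1}}$ from the start), whereas you index by the first factor $z$ and then perform the substitution $y = z^{-1}x$ at the end; this is the same computation.
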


\begin{proof} From \eqref{eq:vx}, we get, for each $n\geq 2$ and $x\in G$,
  $$
    (V^{\otimes n})_x = \bigoplus_{y\in G} V_{xy^{-1}}\otimes 
      \left(\bigoplus_{z_1\dots z_{n-1}=y} V_{z_1}\otimes\dots\otimes V_{z_{n-1}}
      \right) = \bigoplus_{y\in G} V_{xy^{-1}}\otimes (V^{\otimes n-1})_y.
  $$
  This implies \eqref{eq:anx} for $n\geq 2$. The other relations are trivial.
\end{proof}

\begin{theorem}\label{th:main}
  Let $G$ be a finite group and let $V$ be a finite-dimensional $G$-graded vector space.
  Then the Hilbert series $P(T(V)_e,t)$ of
  the identity component of the $G$-grading on $T(V)$ induced by the
  $G$-grading on $V$ is a rational function of the form
  $$
   P(T(V)_e,t) = \frac{p(t)}{(1-dt)q(t)},
  $$
  where $d=\dim V$ and $p(t)$ and $q(t)$ are polynomials with integer coefficients
  with $\deg p(t),\deg g(t)\leq |G|-1$.
\end{theorem}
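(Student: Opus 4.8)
The plan is to encode the recursion of Lemma~\ref{le:lema1} as a single matrix power and then read off $P(T(V)_e,t)$ from a resolvent. Index the rows and columns of a matrix $M$ by the elements of $G$ and set $M_{x,y}=d_{xy^{-1}}$; then \eqref{eq:anx} says precisely that the column vector $\mathbf{a}_n=(a_n^{(x)})_{x\in G}$ satisfies $\mathbf{a}_n=M\mathbf{a}_{n-1}$, so that $\mathbf{a}_n=M^n\mathbf{a}_0$ with $\mathbf{a}_0$ the standard basis vector at the index $e$. Equivalently, $M$ is the matrix of left multiplication by $D=\sum_{z\in G}d_z\,z$ on the group algebra $\Z G$ in its regular representation, and $\mathbf{a}_n$ corresponds to $D^n$. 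In either language one gets $a_n^{(e)}=(M^n)_{e,e}$, whence
$$
  P(T(V)_e,t)=\sum_{n\geq 0}a_n^{(e)}t^n=\Bigl(\sum_{n\geq 0}t^nM^n\Bigr)_{e,e}=\bigl((I-tM)^{-1}\bigr)_{e,e},
$$
the identity being valid as formal power series in $t$.

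Next I would invoke Cramer's rule. The $(e,e)$ entry of $(I-tM)^{-1}$ equals $p(t)/\det(I-tM)$, where $p(t)$ is the $(e,e)$ cofactor of $I-tM$, that is, the determinant of the $(|G|-1)\times(|G|-1)$ matrix obtained by deleting the row and column indexed by $e$. Since every entry of $I-tM$ is a polynomial in $t$ of degree at most $1$ with integer coefficients, $p(t)\in\Z[t]$ has degree at most $|G|-1$ and $\det(I-tM)\in\Z[t]$ has degree at most $|G|$. This already exhibits $P(T(V)_e,t)$ as a rational function with integer-coefficient polynomials; what remains is to extract the factor $1-dt$ from the denominator.

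The key observation is that $M$ has constant row sums equal to $d$: for each $x\in G$ one has $\sum_{y\in G}M_{x,y}=\sum_{y\in G}d_{xy^{-1}}=\sum_{z\in G}d_z=\dim V=d$, since $y\mapsto xy^{-1}$ permutes $G$. Hence the all-ones vector is a right eigenvector of $M$ with eigenvalue $d$, so $\det(dI-M)=0$, and substituting $t=1/d$ gives $\det(I-tM)\big|_{t=1/d}=d^{-|G|}\det(dI-M)=0$. Therefore $t=1/d$ is a root of $\det(I-tM)$, so $1-dt$ divides it in $\Q[t]$; writing $\det(I-tM)=(1-dt)q(t)$ we obtain $\deg q\leq|G|-1$. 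I then upgrade $q(t)$ to integer coefficients by Gauss's lemma: the polynomial $1-dt$ is primitive in $\Z[t]$ (its coefficients $1$ and $-d$ are coprime) and divides $\det(I-tM)\in\Z[t]$ in $\Q[t]$, so the quotient $q(t)$ lies in $\Z[t]$.

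Collecting these facts yields
$$
  P(T(V)_e,t)=\frac{p(t)}{(1-dt)\,q(t)}
$$
with $p,q\in\Z[t]$ of degrees at most $|G|-1$, as claimed. I expect the crux of the argument to be the reformulation of the scalar recursion \eqref{eq:anx} as the resolvent $(I-tM)^{-1}$, together with the row-sum computation that forces the distinguished factor $1-dt$; the degree bounds and the passage to integer coefficients via Gauss's lemma are then routine.
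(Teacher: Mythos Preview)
Your proof is correct and follows essentially the same route as the paper: both encode the recursion of Lemma~\ref{le:lema1} as a linear system (you via the resolvent $(I-tM)^{-1}$, the paper by writing out the equations for the generating functions $F_x(t)$ explicitly), apply Cramer's rule, and then show that $1/d$ is a root of the resulting determinant by observing that the row/column sums of $M$ all equal $d$. Your appeal to Gauss's lemma to guarantee that $q(t)$ has integer coefficients is a detail the paper leaves implicit.
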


\begin{proof}
  Suppose $V=\bigoplus_{x\in G} V_x$. We shall use the notation
  preceding Lemma~\ref{le:lema1}. For each $x\in G$, let
  $F_x(t)$ be the power series in $\Z[[t]]$ defined by
  $F_x(t) = \sum_{n\geq 0} a_{n}^{(x)} t^n$. Note that $F_e(t) = P(T(V)_e,t)$.
  By \eqref{eq:anx}, we have
  \begin{align*}
    F_x(t) & = \sum_{y\in G}\left(\sum_{n\geq 1} 
               d_{xy^{-1}}a_{n-1}^{(y)} t^n\right)
               \quad\text{if $x\ne e$, and}\\
    F_e(t) & = 1 + \sum_{y\in G}\left(\sum_{n\geq 1} 
               d_{y^{-1}}a_{n-1}^{(y)}t^n\right).    
  \end{align*}
  Therefore, these series are related by
  \begin{align*}
    F_x(t) & = \sum_{y\in G}d_{xy^{-1}}tF_y(t)
               \quad\text{if $x\ne e$, and}\\
    F_e(t) & = 1 + \sum_{y\in G}d_{y^{-1}}tF_y(t). 
  \end{align*}
  In other words, they satisfy the linear system
  $$
    \left\{
      \begin{array}{l}
        (d_et-1)F_e(t) + \sum_{y\ne e} d_{y^{-1}}tF_y(t) = -1\\
        (d_et-1)F_x(t) + \sum_{y\ne x} d_{xy^{-1}}tF_y(t) = 0 \quad(x\ne e)
      \end{array}
    \right.
  $$
  over $\Q(t)$.
  
  In order to produce an explicit formula, we enumerate the elements
  of $G$, say $G=\{x_1=e,x_2,\dots,x_s\}$, where $s$ is the order of $G$.
  By Kramer's rule, we obtain
  \begin{equation}\label{eq:rf}
  P(T(V)_e,t) = F_e(t) = \frac{p(t)}{r(t)},
  \end{equation}
  where $p(t)$ and $r(t)$ are polynomials with integer coefficients given
  by
  $$
    p(t) = \det\begin{bmatrix} -1      & d_{x_2}t         & \dots  & d_{x_s}t         \\
                                0      & d_{e}t-1         & \dots  & d_{x_sx_2^{-1}}t \\
                                \vdots & \vdots           & \ddots & \vdots           \\
                                0      & d_{x_2x_s^{-1}}t & \dots  & d_et-1
                \end{bmatrix}
  $$
  and
  $$
    r(t) = \det\begin{bmatrix}  d_et-1        & d_{x_2}t         & d_{x_3}t         & \dots  & d_{x_s}t         \\
                                d_{x_2^{-1}}t & d_{e}t-1         & d_{x_3x_2^{-1}}t & \dots  & d_{x_sx_2^{-1}}t \\
                                d_{x_3^{-1}}t & d_{x_2x_3^{-1}}t & d_et-1           & \dots  & d_{x_sx_3^{-1}}t \\
                                \vdots        & \vdots           & \vdots           & \ddots & \vdots           \\
                                d_{x_s^{-1}}t & d_{x_2x_s^{-1}}t & d_{x_3x_s^{-1}}t & \dots  & d_et-1
                \end{bmatrix}.
  $$        
  
  We end by showing that $\frac{1}{d}$ is a root of $r(t)$. Indeed, since 
  $d=d_{x_1}+\dots+d_{x_s}$, we have
  $$
    r\left(\frac{1}{d}\right) = \frac{1}{d^2}
      \det\begin{bmatrix} 
        d_e-d        & d_{x_2}         & d_{x_3}         & \dots  & d_{x_s}\\
        d_{x_2^{-1}} & d_e-d           & d_{x_3x_2^{-1}} & \dots  & d_{x_sx_2^{-1}}\\
        d_{x_3^{-1}} & d_{x_2x_3^{-1}} & d_e-d           & \dots  & d_{x_sx_3^{-1}}\\
        \vdots       & \vdots          & \vdots          & \ddots & \vdots\\
        d_{x_s^{-1}} & d_{x_2x_s^{-1}} & d_{x_3x_s^{-1}} & \dots  & d_e-d
      \end{bmatrix}.
  $$
  Since for every $j=1,\dots, s$, we have 
  $$
    \sum_{\substack{i=1\\i\ne j}}^s d_{x_ix_j^{-1}} =  d-d_e,
  $$
  the matrix above has columns adding to the zero vector. Hence its determinant is
  equal to zero.
          
\end{proof}

\section{Finite generation of the identity component of a linear grading}
\label{sec:fg}

In this section we shall show that, as a corollary to Theorem~\ref{th:main},
under some restrictive conditions, finite generation of the identity component is equivalent to
the action being scalar.

Given a finite group $G$ and a $G$-graded vector space, say, $V=\bigoplus_{x\in G} V_x$,
we say that the grading is \emph{trivial} if $V_x=\{0\}$ for all but one of the
subspaces $V_x$. It is easy to see that the $G$-grading on $V$ is trivial if and only if
the action of $(kG)^{\ast}$ on $T(V)$ is scalar.

We start by showing that the invariants are finitely generated under
a trivial grading.

\begin{theorem}
  Let $G$ be a finite group and let $V$ be a finite-dimensional space
  trivially graded by $G$. Then the identity component $T(V)_e$ of $T(V)$
  under the $G$-grading induced by the grading on $V$ is finitely generated.
\end{theorem}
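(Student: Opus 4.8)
The plan is to prove this directly, by identifying $T(V)_e$ explicitly and exhibiting a finite generating set, rather than by extracting information from the Hilbert series of Theorem~\ref{th:main}. The rationality result is not needed for this direction; the combinatorial structure of the identity component under a trivial grading is transparent enough to argue by hand.

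First I would unwind the hypothesis. Since the grading is trivial, $V_x=\{0\}$ for all but one $x\in G$, so there is a unique $g\in G$ with $V=V_g$; equivalently $d_g=d$ and $d_x=0$ for $x\ne g$. Let $m$ be the order of $g$ in $G$. The next step is to compute the grade of an arbitrary monomial: a tensor $v_1\otimes\dots\otimes v_n$ with all $v_i\in V=V_g$ lies in the homogeneous component indexed by the product $g\cdots g=g^n$. Hence $V^{\otimes n}\subseteq T(V)_{g^n}$, and $V^{\otimes n}\subseteq T(V)_e$ precisely when $g^n=e$, that is, when $m\mid n$. Together with the scalars sitting in degree $0$, this yields the description
$$
  T(V)_e = \bigoplus_{j\geq 0} V^{\otimes mj},
$$
the $m$-th Veronese subalgebra of $T(V)$.

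The key step is then to observe that this subalgebra is generated by its degree-$m$ component $V^{\otimes m}$. Every monomial of length $mj$ factors as a product of $j$ monomials each of length $m$, so the span of the length-$m$ monomials, namely $V^{\otimes m}$, generates $T(V)_e$ as an algebra. Since $\dim V^{\otimes m}=d^m$ is finite, any basis of $V^{\otimes m}$ is a finite generating set, and $T(V)_e$ is finitely generated. (The degenerate case $g=e$, where $m=1$ and $T(V)_e=T(V)$, is covered uniformly by the same reasoning.)

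I do not expect a genuine obstacle here: the argument is elementary once the identity component is recognised as the Veronese subalgebra $\bigoplus_{j}V^{\otimes mj}$. The only point requiring care is the verification that the grade of a length-$n$ monomial is exactly $g^n$, so that membership in $T(V)_e$ is governed solely by the divisibility condition $m\mid n$; everything else follows from the closure of $T(V)_e$ under multiplication and the finite-dimensionality of $V^{\otimes m}$.
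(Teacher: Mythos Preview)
Your proof is correct and follows essentially the same route as the paper: identify the unique $g\in G$ with $V=V_g$, let $m$ be its order, recognise $T(V)_e=\bigoplus_{j\geq 0}V^{\otimes mj}$, and note that the $d^m$ monomials of length $m$ on a basis of $V$ generate it. The paper's argument is the same, just stated more tersely and without the Veronese terminology or the explicit treatment of the case $g=e$.
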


\begin{proof}
  Suppose that $V=\bigoplus_{x\in G} V_x$ and that $x\in G$ is such that
  $V_x = V$, while $V_y=\{0\}$ for all $y\in G$, $y\ne x$. Then
  $T(V)_e = k \oplus V^{\otimes r} \oplus V^{\otimes 2r}\oplus\dots$,
  where $r$ is the order of $x$ in $G$. It follows that, given a 
  basis $\{v_1,\dots, v_d\}$ of $V$, the subalgebra $T(V)_e$ is generated
  by the set of all $d^r$ monomials of length $r$ on the basis elements $v_1,\dots,v_d$. 
\end{proof}

For a partial converse, we shall need the following result of Dicks and Formanek.

\begin{lemma}\label{le:df}
  Let $V$ be a finite-dimensional vector space and let $H$ be
  a Hopf algebra. Suppose that $V$ is a left $H$-space and
  consider the linear action of $H$ on $T(V)$ induced by the
  action of $H$ on $V$. Then the (free) subalgebra of invariants
  $T(V)^H$ of $T(V)$ under the action of $H$ is a finitely
  generated algebra if and only if $P(T(V)^H,t)^{-1}$ is a
  polynomial
\end{lemma}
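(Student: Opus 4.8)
The plan is to exploit the crucial fact, recalled earlier from \cite{FMP04}, that $T(V)^H$ is itself a \emph{free} associative algebra, together with the observation that the degree grading it inherits from $T(V)$ makes it a connected graded algebra, i.e.\ its degree-zero component is just $k$ (indeed $k$ is fixed since $h\cdot 1 = \varepsilon(h)1$). First I would fix a set $Y$ of homogeneous free generators for $T(V)^H$; since the algebra is graded and connected, such a homogeneous generating set exists and, by freeness, can be taken to be a free generating set. I would let $b_n$ denote the number of these generators having degree $n$, noting that $b_0 = 0$ because the degree-zero part is $k$.

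The key computation is to express $P(T(V)^H,t)$ in terms of the generator-counting series $B(t) = \sum_{n\geq 1} b_n t^n$. Because $T(V)^H = \fa{k}{Y}$ is free on $Y$, a $k$-basis of it is given by all words in the elements of $Y$, and a word built from $m$ generators of degrees $n_1,\dots,n_m$ contributes a basis element of degree $n_1+\cdots+n_m$. Summing over all lengths $m\geq 0$ yields, as an identity of formal power series,
$$
  P(T(V)^H,t) = \sum_{m\geq 0} B(t)^m = \frac{1}{1-B(t)},
$$
where the geometric series is legitimate precisely because $B(t)$ has zero constant term. Consequently $P(T(V)^H,t)^{-1} = 1 - B(t)$.

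With this formula in hand the equivalence becomes almost formal: the algebra $T(V)^H$ is finitely generated if and only if the homogeneous free generating set $Y$ can be taken finite, which happens if and only if all but finitely many $b_n$ vanish, that is, if and only if $B(t)$ is a polynomial, that is, if and only if $P(T(V)^H,t)^{-1} = 1 - B(t)$ is a polynomial.

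The step I expect to require the most care is the equivalence between finite generation of $T(V)^H$ as an \emph{algebra} and finiteness of the homogeneous \emph{free} generating set, since a priori finite algebra-generation is weaker than having finitely many free generators. This is exactly where connectedness is used: for a connected graded algebra $A$ with augmentation ideal $A_+$, the images of any minimal homogeneous generating set form a basis of $A_+/A_+^2$, and for a free algebra the free generators constitute such a minimal set. Hence the number of free generators in degree $n$ equals $\dim(A_+/A_+^2)_n$ and is intrinsic; finite algebra-generation forces $\dim A_+/A_+^2 < \infty$, which in turn forces $Y$ to be finite. I would make this dimension argument explicit, and I would also take care to record that every free generator lies in positive degree, guaranteeing $B(0)=0$, without which the geometric-series manipulation above would fail.
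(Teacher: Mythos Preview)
Your argument is correct and is essentially the standard proof: the paper itself does not supply an independent argument but simply notes that the proof of \cite[Lemma~2.1]{DF82} carries over verbatim, and that proof is precisely the free-algebra Hilbert-series identity $P=\frac{1}{1-B(t)}$ together with the observation that finite generation is equivalent to $B(t)$ being a polynomial. Your extra care with the $A_+/A_+^2$ argument, ensuring that finite algebra-generation forces finiteness of any homogeneous free generating set, is exactly the point that makes the equivalence go through and matches what is implicit in the cited reference.
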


\begin{proof}
  The same proof of \cite[Lemma~2.1]{DF82} applies.
\end{proof}

\begin{theorem}
  Let $G$ be a finite group and let $V$ be a $G$-graded
  vector space, say, $V=\bigoplus_{x\in G} V_x$ with $V_e\ne \{0\}$.
  If the grading is not trivial, then $T(V)_e$ is not
  finitely generated.
\end{theorem}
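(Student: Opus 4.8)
The plan is to combine the Dicks--Formanek criterion (Lemma~\ref{le:df}) with the explicit rational form of $P(T(V)_e,t)$ supplied by the proof of Theorem~\ref{th:main}, and then to draw a contradiction from Perron--Frobenius theory. By Lemma~\ref{le:df}, $T(V)_e$ is finitely generated if and only if $P(T(V)_e,t)^{-1}$ is a polynomial, so it suffices to prove that $P(T(V)_e,t)^{-1}$ is \emph{not} a polynomial. The matrix to work with is the nonnegative integer matrix $M=(d_{xy^{-1}})_{x,y\in G}$ already implicit in Lemma~\ref{le:lema1}: the recursion \eqref{eq:anx} says precisely that $a_n^{(x)}=(M^n)_{x,e}$, so that $P(T(V)_e,t)=\sum_{n\ge 0}(M^n)_{e,e}\,t^n=[(I-tM)^{-1}]_{e,e}$. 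Expanding this $(e,e)$-entry of the resolvent by Cramer's rule recovers exactly the formula $P(T(V)_e,t)=p(t)/r(t)$ of Theorem~\ref{th:main}, where, up to sign, $r(t)=\det(I-tM)$ and $p(t)=\det(I-tM')$, with $M'$ the principal submatrix of $M$ obtained by deleting the row and the column indexed by $e$. Consequently $P(T(V)_e,t)^{-1}=r(t)/p(t)$ is a polynomial exactly when $p(t)$ divides $r(t)$.

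The next step is to translate this divisibility into a spectral statement. Passing to the (reversed) characteristic polynomials of $M$ and $M'$, which are monic of degrees $|G|$ and $|G|-1$, one checks that $p(t)\mid r(t)$ holds if and only if the characteristic polynomial of $M'$ divides that of $M$; because the degrees differ by $1$, this is in turn equivalent to the very rigid condition $\mathrm{spec}(M)=\mathrm{spec}(M')\cup\{\alpha\}$ for a single additional eigenvalue $\alpha$, and comparing traces forces $\alpha=M_{e,e}=d_e$. Thus finite generation of $T(V)_e$ is equivalent to requiring that the spectrum of $M$ be obtained from that of its principal submatrix $M'$ by adjoining the single value $d_e$. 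The heart of the argument is to rule this out whenever the grading is nontrivial.

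For that I would invoke Perron--Frobenius theory. First replace $G$ by the subgroup generated by the support $S=\{x\in G:d_x\ne 0\}$; this changes neither $T(V)_e$ (the $e$-component only involves products of factors from the $V_x$ with $x\in S$) nor the hypotheses, and it makes $M$ irreducible while keeping $|G|\ge 2$, so that $M'$ is a nonempty proper principal submatrix. Since every row of $M$ sums to $d=\sum_x d_x$, the all-ones vector is a positive eigenvector, whence $d=\rho(M)$ is the Perron eigenvalue. The strict monotonicity of the Perron root under passage to a proper principal submatrix of an irreducible nonnegative matrix gives $\rho(M')<\rho(M)=d$, so every eigenvalue of $M'$ has modulus at most $\rho(M')<d$. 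On the other hand, nontriviality of the grading yields some $d_x\ne 0$ with $x\ne e$, whence $d=\sum_x d_x>d_e$. Therefore the Perron eigenvalue $d\in\mathrm{spec}(M)$ lies neither in $\mathrm{spec}(M')$ (its modulus is too large) nor equals $d_e$, contradicting $\mathrm{spec}(M)=\mathrm{spec}(M')\cup\{d_e\}$. Hence $p(t)\nmid r(t)$, so $P(T(V)_e,t)^{-1}$ is not a polynomial and $T(V)_e$ is not finitely generated. I expect the main obstacle to be the middle step: cleanly justifying that divisibility of $r$ by $p$ is equivalent to the submatrix spectral condition, and securing the \emph{strict} inequality $\rho(M')<\rho(M)$. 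Here the hypothesis $V_e\ne\{0\}$ is convenient, since $d_e\ge 1$ makes the diagonal of $M$ positive and hence $M$ primitive, so that $d$ is in fact the unique and simple eigenvalue of maximal modulus.
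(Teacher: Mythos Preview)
Your overall shape—recast $P(T(V)_e,t)$ as $[(I-tM)^{-1}]_{e,e}=p(t)/r(t)$ with $r(t)=\det(I-tM)$ and $p(t)=\det(I-tM')$, then invoke Lemma~\ref{le:df}—is sound, and the reduction to $G=\langle S\rangle$ making $M$ irreducible (even primitive, since $d_e\ge 1$) is correct. The Perron--Frobenius input $\rho(M')<\rho(M)=d$ is also correct. The difficulty is exactly where you suspect: the passage from $p\mid r$ to $\chi_{M'}\mid \chi_M$. One has $r(t)=t^{|G|}\chi_M(1/t)$ and $p(t)=t^{|G|-1}\chi_{M'}(1/t)$ only as Laurent identities; as honest polynomials, $\deg r$ and $\deg p$ drop by the algebraic multiplicities of $0$ in $\mathrm{spec}(M)$ and $\mathrm{spec}(M')$. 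Your implication $p\mid r\Rightarrow \chi_{M'}\mid\chi_M$ is equivalent to $\deg(r/p)\le 1$, i.e.\ to $M'$ having no more zero eigenvalues than $M$, and you give no argument for this. Without it, the Perron eigenvalue $d$ being absent from $\mathrm{spec}(M')$ says only that $1/d$ is a root of $r$ not of $p$, which is the wrong direction for proving $p\nmid r$. So as written the argument does not close.

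The paper proceeds quite differently and avoids spectral theory altogether. It first reduces not by shrinking $G$ but by passing to a \emph{quotient}: pick $x\ne e$ with $V_x\ne 0$, set $W=V_e\oplus V_x$, and use the surjection $T(V)_e\twoheadrightarrow T(W)_e$ to reduce to the case where only $d_e$ and $d_x$ are nonzero. In that case the paper exhibits a specific root of $p$ that is \emph{not} a root of $r$, namely $t=1/d_e$: choosing $x_2=x^{-1}$ in the enumeration of $G$, one sees that the second row of the matrix defining $p(1/d_e)$ vanishes identically, while $r(1/d_e)$ is (up to sign) $(d_x/d_e)^{|G|}$ times the determinant of a permutation matrix, hence nonzero. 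Thus $(1-d_e t)\mid p$ but $(1-d_e t)\nmid r$, giving $p\nmid r$ directly. This is the correct direction for Lemma~\ref{le:df}, is entirely elementary, and explains the role of the hypothesis $V_e\ne 0$: it supplies the test point $1/d_e$. Your Perron--Frobenius route is conceptually appealing, but to make it work you would need an independent reason why the quotient $r/p$, if a polynomial, must be linear—something like controlling the zero eigenvalues of $M'$ relative to those of $M$—and that is not supplied.
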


\begin{proof}
  If the grading
  is not trivial, there exits $x\in G$, $x\ne e$, such that
  $V_x\ne \{0\}$. Let $W=V_e\oplus V_x$. Then $W$ is a $G$-graded
  vector space and the canonical surjection $V\rightarrow W$
  is a homomorphism of $G$-graded spaces. Thus, it induces a surjective
  homomorphism of $(kG)^{\ast}$-module algebras $T(V)\rightarrow T(W)$, which,
  then, restricts to a surjective algebra map $T(V)_e\rightarrow T(W)_e$.
  It follows that if $T(V)_e$ is finitely generated, then so is $T(W)_e$.
  
  We have, thus, reduced the problem to considering a $G$-vector space
  $V=\bigoplus_{x\in G}V_x$ with the property that there exist $x\in G$,
  $x\ne e$, such that $V_y = \{0\}$ for all $y\in G\setminus\{e,x\}$, while
  $V_e\ne \{0\}$ and $V_x\ne\{0\}$. 
  Applying Lemma~\ref{le:df} to the linear action 
  of $(kG)^{\ast}$ on $T(V)$ induced by the $G$-grading on $V$, it suffices 
  to show that $P(T(V)_e,t)$ is not the inverse of a polynomial. For each
  $y\in G$, let $d_y=\dim V_y$. We shall
  use formula~\ref{eq:rf} to show that $\frac{1}{d_e}$ is a root of
  $p(t)$ but not of $r(t)$. This implies that $(1-d_et)$ is a factor
  of $p(t)$ which does not divide $r(t)$. So $P(T(V)_e,t)$ can not be
  the inverse of a polynomial.
  
  To show that $\frac{1}{d_e}$ is a root of $p(t)$, observe that 
  $p(\frac{1}{d_e})$ is the determinant of a matrix with a row
  of zeros (choosing $x_2$ to be $x^{-1}$ in the enumeration of the elements
  of $G$ makes the second row of this matrix null). On the other hand, $r(\frac{1}{d_e})$ equals the
  $\frac{d_x}{d_e}$ times the determinant of a matrix obtained from the
  identity matrix by a permutation of columns and, thus, is
  different from zero.
\end{proof}

\end{document}